\newtheorem{theorem}{Theorem}[section]
\newtheorem{lemma}[theorem]{Lemma}
\newtheorem{claim}[theorem]{Claim}
\theoremstyle{definition}
\theoremstyle{remark}
\newtheorem{remark}[theorem]{Remark}
\numberwithin{equation}{section}
\begin{document}

\title[Left-orderability]{Left-orderable fundamental group and Dehn surgery on the knot $5_2$}

\author{Ryoto Hakamata}
\address{Graduate School of Education, Hiroshima University,
1-1-1 Kagamiyama, Higashi-hiroshima, Japan 739-8524.}

\author{Masakazu Teragaito}
\address{Department of Mathematics and Mathematics Education, Hiroshima University,
1-1-1 Kagamiyama, Higashi-hiroshima, Japan 739-8524.}
\email{teragai@hiroshima-u.ac.jp}
\thanks{The second author is partially supported by Japan Society for the Promotion of Science,
Grant-in-Aid for Scientific Research (C), 22540088.
}%

\subjclass[2010]{Primary 57M25; Secondary 06F15}



\keywords{left-ordering, Dehn surgery}

\begin{abstract}
We show that the resulting manifold by $r$-surgery on the knot $5_2$, which is
the two-bridge knot corresponding to the rational number $3/7$, has left-orderable
fundamental group if the slope $r$ satisfies $0\le r \le 4$.
\end{abstract}

\maketitle

\section{Introduction}

A group $G$ is said to be \textit{left-orderable} if
it admits a strict total ordering, which is left invariant.
More precisely, this means that
if $g<h$ then $fg<fh$ for any $f,g,h\in G$.
The fundamental groups of many $3$-manifolds are known to be left-orderable.
On the other hand, the fundamental groups of lens spaces are not
left-orderable, because any left-orderable group is torsion-free.
The notion of an $L$-space is introduced by Ozsv\'{a}th and Szab\'{o} \cite{OS}
in terms of Heegaard-Floer homology.
Lens spaces, 
Seifert fibered manifolds with finite fundamental groups are
typical examples of $L$-spaces.
Although it is an open problem to give a topological characterization of an $L$-space,
there is a conjectured connection between $L$-spaces and left-orderability.
More precisely,
Boyer, Gordon and Watson \cite{BGW} conjecture
that an irreducible rational homology sphere is an $L$-space if and only if
its fundamental group is not left-orderable.
They give affirmative answers for several classes of $3$-manifolds.

It is well known that all knot groups are left-orderable (see \cite{BRW}), but
the resulting closed $3$-manifold by Dehn surgery on a knot does not necessarily have
a left-orderable fundamental group.
For examples, there are many knots which admit Dehn surgery yielding lens spaces. 
By \cite{OS}, the figure-eight knot has no Dehn surgery yielding $L$-spaces.
Hence we can expect that any non-trivial surgery on the figure-eight knot
yields a manifold whose fundamental group is left-orderable, if we support
the conjecture above.
In fact,
Boyer, Gordon and Watson \cite{BGW} show that if $-4<r<4$, then
$r$-surgery on the figure-eight knot yields a manifold whose fundamental group is left-orderable.
In addition, Clay, Lidman and Watson \cite{CLW} verified it for $r=\pm 4$ through 
a different argument.

In this paper, we follow the argument of \cite{BGW} for the most part to handle the knot $5_2$ in the knot table (see \cite{R}).
This knot is the two-bridge knot corresponding to the rational number $3/7$, which is 
a twist knot.
We believe that this is an appropriate target next to the figure-eight knot.
Since $5_2$ is non-fibered, it does not admit Dehn surgery yielding an $L$-space \cite{N}.
Hence we can expect again that any non-trivial Dehn surgery on $5_2$ yields
a $3$-manifold whose fundamental group is left-orderable.

\begin{theorem}\label{thm:main}
Let $K$ be the knot $5_2$. 
If $0\le r\le 4$, then $r$-surgery on $K$ yields a manifold whose fundamental group is left-orderable.
\end{theorem}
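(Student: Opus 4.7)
My plan is to follow the strategy of Boyer-Gordon-Watson \cite{BGW} for the figure-eight knot, adapting each step to $5_2$. The idea is to construct a continuous one-parameter family of non-abelian representations of $\pi_1(S^3\setminus K)$ into $\widetilde{PSL}_2(\mathbb{R})$, which is itself left-orderable, and to use the intermediate value theorem to arrange that every slope $r$ in the desired range arises as the rotation number of the surgery element $\mu^p\lambda^q$, where $\mu$ and $\lambda$ are the standard meridian-longitude pair. The resulting non-trivial homomorphism $\pi_1(S^3_r(K))\to\widetilde{PSL}_2(\mathbb{R})$ then yields left-orderability of the filled-in group by the criterion used in \cite{BGW} for the analogous figure-eight computation.

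I would split the range into two cases. The case $r=0$ is immediate: since $5_2$ is hyperbolic, zero-surgery is irreducible and has $b_1=1$, so left-orderability follows from the general theorem on irreducible $3$-manifolds of positive first Betti number \cite{BRW}. For $0<r\le 4$ I would begin with the standard two-bridge presentation $\pi_1(S^3\setminus K)=\langle a,b\mid aw=wb\rangle$, where $w$ is the word associated with the fraction $3/7$. Non-abelian $SL_2(\mathbb{C})$-characters are parametrized by $x=\mathrm{tr}\,\rho(a)$ and $y=\mathrm{tr}\,\rho(ab)$, subject to the Riley polynomial $\Phi(x,y)=0$ coming from the relator. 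Fixing $x=2\cos\theta\in(-2,2)$ makes the meridian elliptic, and for $\theta$ ranging over a suitable subinterval one extracts a continuous branch of real solutions $y=y(\theta)$; after conjugation the image lies in $PSL_2(\mathbb{R})$ and lifts coherently to $\widetilde{PSL}_2(\mathbb{R})$.

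The final step is to realize all slopes $r\in(0,4]$. Writing the longitude $\lambda$ as an explicit word in $a,b$, I would compute the rotation number of $\rho_\theta(\mu^p\lambda^q)$ as a continuous function of $\theta$ and invoke the intermediate value theorem to show that every $r=p/q\in(0,4]$ is attained. At a parameter where this rotation number vanishes, $\rho_\theta$ descends to a non-trivial homomorphism $\pi_1(S^3_r(K))\to\widetilde{PSL}_2(\mathbb{R})$, and the cited criterion closes the argument. The main obstacle will be the explicit computational analysis: compared with the figure-eight case, the relator $w$ for $5_2$ is longer, so the Riley polynomial has higher degree and the rotation-number function associated with the longitude is considerably more intricate. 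The delicate point is to locate an interval of parameters $\theta$ on which non-abelian real characters exist, the rotation number varies continuously, and its range covers the full slope interval $(0,4]$, including the correct limiting behaviour at the endpoints $r\to 0^+$ and $r=4$.
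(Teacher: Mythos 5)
Your high-level architecture does match the paper's: dispose of $r=0$ via \cite{BRW}, build a one-parameter family of non-abelian real representations constrained by the Riley polynomial, use continuity and the intermediate value theorem on a slope function, descend to the filled manifold, lift to the universal covering group (which is left-orderable), and conclude with \cite[Theorem 1.1]{BRW}. But the central technical choice in your proposal is different from the paper's and is where the real gap lies. You take the meridian \emph{elliptic}, $\mathrm{tr}\,\rho(a)=2\cos\theta\in(-2,2)$, and propose to detect slopes by rotation numbers. The paper (and, contrary to your description, also \cite{BGW} for the figure-eight knot, following Khoi \cite{Kh}) takes the meridian \emph{hyperbolic}: $\rho_s(x)=\mathrm{diag}(\sqrt{t},1/\sqrt{t})$ with $t>3$, so that $\rho_s(x)$ and $\rho_s(\lambda)$ are simultaneously diagonal with positive real entries, the descent condition becomes the scalar equation $A_s^pB_s^q=1$, i.e.\ $p/q=-\log B_s/\log A_s$, and the limits of this function as $s\to 0^+$ and $s\to\infty$ produce exactly the interval $(0,4)$ (Lemma \ref{lem:g-image}). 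You give no argument that a branch of real characters of $5_2$ with elliptic meridian exists whose rotation numbers sweep out $(0,4]$; this is not a routine computation to be deferred. The natural elliptic families for this knot bifurcate from the abelian representations at the unit-circle roots of the Alexander polynomial $2t^2-3t+2$ and detect slopes near $0$ (this is the mechanism exploited in the literature for slopes outside $[0,4]$), so the burden is on you to show an elliptic branch reaches all of $(0,4)$ — precisely the range that the hyperbolic-meridian family is designed to cover.

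The second concrete gap is $r=4$. No intermediate-value argument of this type attains $4$: the slope $4$ is the boundary slope of the once-punctured Klein bottle bounded by $5_2$, the $4$-surgered manifold is toroidal, and in the paper the function $g$ only \emph{converges} to $4$ as $s\to\infty$; it is never shown (nor expected) to take that value. The paper handles $r=4$ by quoting \cite{T}, exactly as \cite{BGW} had to quote \cite{CLW} for $r=\pm4$ on the figure-eight knot, so your claim that the same deformation argument covers $(0,4]$ including the endpoint is unsupported and almost certainly false as stated. Two smaller points: vanishing of the rotation number of $\tilde\rho_\theta(\mu^p\lambda^q)$ by itself does not make that element trivial (hyperbolic elements also have zero rotation number); you need the peripheral image to lie in a one-parameter subgroup of $\widetilde{PSL}_2(\mathbb{R})$ — in the paper this normalization is the content of Lemma \ref{lem:key}, proved using that $\lambda$ is a commutator together with Wood's inequality \cite{W}. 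And before invoking \cite[Theorem 1.1]{BRW} you must note that $M(r)$ is irreducible, which the paper gets from \cite{HT}.
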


In fact, $0$-surgery on any knot yields a prime manifold whose first betti number is $1$, and 
such manifold has left-orderable fundamental group \cite[Corollary 3.4]{BRW}.
Furthermore, the same conclusion holds for $4$-surgery on twist knots \cite{T}.
Hence we will handle the case where $0<r<4$ in this paper.

\section{Knot group and representations}\label{sec:knotgroup}

Let $K$ be the knot $5_2$ in the knot table (\cite{R}).
See Figure \ref{fig:knot}.
This knot is the two-bridge knot corresponding to the rational number $3/7$.
In this diagram, $K$ bounds a once-punctured Klein bottle, as seen from the checkerboard coloring,
whose boundary slope is $4$.
In fact, $4$-surgery on $K$ gives a toroidal manifold, and
$1$, $2$ and $3$-surgeries give small Seifert fibered manifolds (\cite{BW}).

\begin{figure}[ht]
\includegraphics*[scale=0.6]{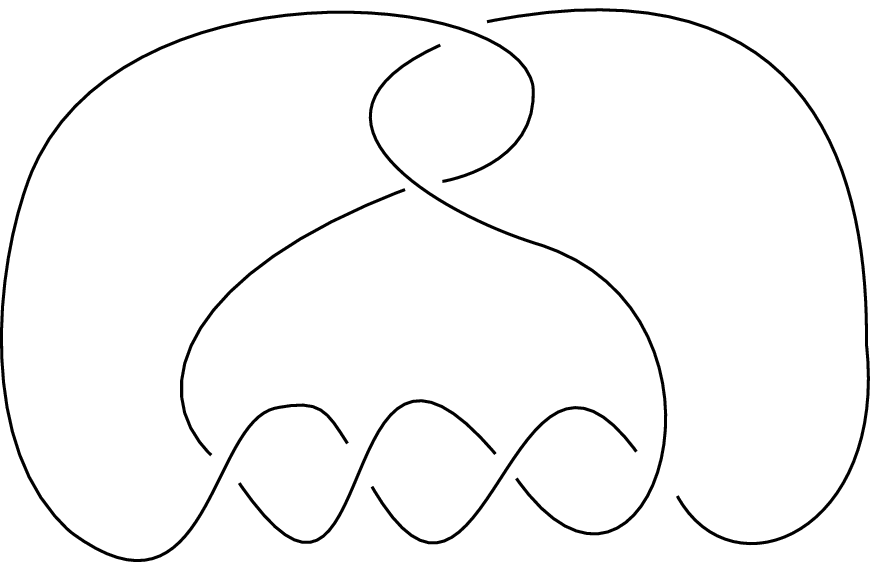}
\caption{}\label{fig:knot}
\end{figure}

Let $M$ be the knot exterior of $K$.
It is well known that the knot group $G=\pi_1(M)$ has a presentation
$\langle x, y\mid wx=yw \rangle$, where $x$ and $y$ are meridians and $w=xyx^{-1}y^{-1}xy$.
Also, a (preferred) longitude $\lambda$ is given by
$x^{-4}w^*w$, where $w^*=yxy^{-1}x^{-1}yx$ corresponds to the reverse word of $w$.
(These facts are easily obtained from Schubert's normal form of the knot \cite{S}.) 

Let $s>0$ be a real number, and let $T=\frac{2+3s+2s^2+\sqrt{s^2+4}}{2s}$.
Then it is easy to see that $T>4$.
Also, let $t=\frac{T+\sqrt{T^2-4}}{2}$. Then, $t>3$ and 
\begin{equation}\label{eq:t}
t=\frac{2+3s+2s^2+\sqrt{s^2+4}+\sqrt{(2+3s+2s^2+\sqrt{s^2+4})^2-16s^2}}{4s}.
\end{equation}

Let $\phi=s(t+t^{-1})^2-(2s^2+3s+2)(t+t^{-1})+s^3+3s^2+4s+3$.
Since $t+t^{-1}=T$, $\phi=sT^2-(2s^2+3s+2)T+s^3+3s^2+4s+3$.
If we solve the equation $\phi=0$ with respect to $T$, we obtain the expression of $T$
in terms of $s$ as above.
Thus $\phi=0$ holds.

We now examine some limits, which will be necessary later.

\begin{lemma}\label{lem:keylimit}
\begin{itemize}
\item[(1)] $\lim_{s\to +0}t=\infty$,
\item[(2)] $\lim_{s\to +0}st=2$,
\item[(3)] $t-s>2$ and $\lim_{s\to\infty}(t-s)=2$,
\item[(4)] $\lim_{s\to \infty}s/t=1$,
\item[(5)] $\lim_{s\to \infty}s(t-s-2)=0$, 
\item[(6)] $\lim_{s\to \infty}t(t-s-2)=0$.
\end{itemize}
\end{lemma}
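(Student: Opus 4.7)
The plan is to reduce everything to two algebraic facts: the explicit formula $T=\frac{2+3s+2s^2+\sqrt{s^2+4}}{2s}$, and the relation $t+t^{-1}=T$, which makes $t$ the larger root of $x^2-Tx+1=0$. The limits (1) and (2) for $s\to 0^+$ will be the easiest: directly from the formula, $T\to\infty$ (the denominator $2s\to 0$, numerator $\to 4$), so $t=(T+\sqrt{T^2-4})/2\to\infty$. For (2), $sT\to 2$ is visible from the formula, and $s\sqrt{T^2-4}-sT=sT(\sqrt{1-4/T^2}-1)=O(s/T)\to 0$, so $st\to 2$.

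The central step, which all of (3)--(6) will hinge on, is to substitute $T=s+2+u$ into the identity $\phi=0$. A short expansion collapses $\phi$ to
\[
su^2+(s-2)u-1=0,
\]
whose unique positive root is $u=(2-s+\sqrt{s^2+4})/(2s)$. (The formula for $T$ immediately gives $u=T-s-2>0$, so we take the positive root.) To obtain the strict inequality $t-s>2$ in (3), I would evaluate $x^2-Tx+1$ at $x=s+2$: with $T=s+2+u$ this simplifies to $1-u(s+2)$, and the inequality $u(s+2)>1$ reduces after clearing denominators to $(s+2)\sqrt{s^2+4}>s^2+2s-4$. This is immediate when $s^2+2s-4\le 0$, and otherwise follows by squaring, since $(s+2)^2(s^2+4)-(s^2+2s-4)^2=4s(3s+8)>0$. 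Because $s+2>1>1/t$, this places $s+2$ strictly between the two roots of $x^2-Tx+1$, so $s+2<t$.

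For the asymptotics as $s\to\infty$, the expansion $\sqrt{s^2+4}=s+2/s+O(1/s^3)$ gives $u=1/s+O(1/s^2)$, so $u\to 0$ and $su\to 1$. From $t+t^{-1}=T$ I get $t=T-1/t$, hence
\[
t-s-2=u-1/t.
\]
Then $1/t\to 0$ (since $t\ge T-1\to\infty$) proves the limit in (3); $s/t=1-(t-s)/t\to 1$ proves (4); $s(t-s-2)=su-s/t\to 1-1=0$ gives (5); and $t(t-s-2)=(t/s)\cdot s(t-s-2)\to 1\cdot 0=0$ gives (6). The one step where I expect friction is the sign analysis for the strict lower bound in (3)---a bare expansion in powers of $1/s$ only gives the limit, not the uniform inequality---so the substitution $T=s+2+u$ paired with the squaring trick is the device that makes the full statement visible.
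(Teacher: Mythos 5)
Your proof is correct, and it takes a genuinely different route from the paper's. The paper argues directly from the closed-form radical expression (\ref{eq:t}) for $t$: it decomposes $t-s$ into fractions involving $\sqrt{(2+3s+2s^2+\sqrt{s^2+4})^2-16s^2}$ and computes every limit term by term, and it handles (5) and (6) by similar explicit manipulations. You instead exploit the two quadratic relations: substituting $T=s+2+u$ into $\phi=0$ collapses the polynomial to $su^2+(s-2)u-1=0$ (this expansion checks out), and $t+t^{-1}=T$ gives $t-s-2=u-1/t$, after which (3)--(6) all follow from the elementary facts $u\to 0$, $su\to 1$, $1/t\to 0$, $s/t\to 1$. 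Besides being structurally cleaner, your route delivers something the paper's own proof does not: the strict inequality $t-s>2$. The paper's decomposition is only used to conclude ``$t-s>0$'' (positivity of each numerator term), which is weaker than what Lemma \ref{lem:keylimit}(3) asserts and weaker than what the paper itself invokes in its proof of (6), namely $s+2<t<s+3$ for large $s$; moreover a termwise patch cannot close this, since the second term of the paper's decomposition, $\bigl(\sqrt{(2+3s+2s^2+\sqrt{s^2+4})^2-16s^2}-2s^2\bigr)/(4s)$, is eventually less than $1$, so only a combined estimate works. Your evaluation of $x^2-Tx+1$ at $x=s+2$, giving $1-u(s+2)<0$ from $(s+2)\sqrt{s^2+4}>s^2+2s-4$ (correct: the difference of squares is $4s(3s+8)>0$), together with the observation $s+2>1>1/t$ to place $s+2$ strictly between the two roots, supplies exactly this missing uniform bound. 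In short, your proposal both proves the lemma and repairs a genuine gap in the published argument.
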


\begin{proof}
(1) and (2) are obvious from (\ref{eq:t}).
For (3), 
\[
t-s=\frac{2+3s+\sqrt{s^2+4}+\left(\sqrt{(2+3s+2s^2+\sqrt{s^2+4})^2-16s^2}-2s^2\right)}{4s}
\] 
shows $t-s>0$, since $(2+3s+2s^2+\sqrt{s^2+4})^2-16s^2>4s^4$.
The second conclusion follows from
\[
\lim_{s\to\infty}\frac{2+3s+\sqrt{s^2+4}}{4s}=1,\quad \lim_{s\to\infty}\frac{\sqrt{(2+3s+2s^2+\sqrt{s^2+4})^2-16s^2}-2s^2}{4s}=1.
\]

A direct calculation shows (4).

For (5),
\begin{equation*}
\begin{split}
4s(t-s-2)-2&=\left(\sqrt{(2+3s+2s^2+\sqrt{s^2+4})^2-16s^2}+\sqrt{s^2+4}\right)\\
&\quad -(2s^2+5s).
\end{split}
\end{equation*}
Since the right hand side converges to $-2$, we have $\lim_{s\to\infty}s(t-s-2)=0$.

From (3), an inequality $s+2<t<s+3$ holds for sufficiently large $s$.
Then $(s+2)(t-s-2)<t(t-s-2)<(s+3)(t-s-2)$.
Hence (3) and (5) imply (6).
\end{proof}

Let $\rho_s: G\to SL_2(\mathbb{R})$ be the representation defined by the correspondence
\begin{equation}\label{eq:rho}
\rho_s(x)=
\begin{pmatrix}
\sqrt{t} & 0 \\
0 & \frac{1}{\sqrt{t}}
\end{pmatrix},\quad
\rho_s(y)=
\begin{pmatrix}
\frac{t-s-1}{\sqrt{t}-\frac{1}{\sqrt{t}}} & \frac{s}{(\sqrt{t}-\frac{1}{\sqrt{t}})^2}-1 \\
-s & \frac{s+1-\frac{1}{t}}{\sqrt{t}-\frac{1}{\sqrt{t}}}
\end{pmatrix}.
\end{equation}
Here, we remain using the variable $t$ to reduce the complexity.
By using the fact that $s$ and $t$ satisfies the equation $\phi=0$, 
we can check $\rho_s(wx)=\rho_s(yw)$ by a direct calculation.
Hence the correspondence on $x$ and $y$ above gives a homomorphism from $G$ to $SL_2(\mathbb{R})$.
In addition, 
$\rho_s(xy)\ne\rho_s(yx)$, and so $\rho_s$ has the non-abelian image.

\begin{remark}
This representation of $G$ comes from that in \cite[p.786]{Kh}.
The polynomial $\phi$ corresponds to the Riley polynomial \cite{Ri}.
\end{remark}

\begin{lemma}\label{lem:longitude}
For a longitude $\lambda$, $\rho_s(\lambda)$ is diagonal, and
its $(1,1)$-entry is a positive real number.
\end{lemma}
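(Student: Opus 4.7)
The argument splits into a structural part (diagonality) and a computational part (positivity of the $(1,1)$-entry).

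\emph{Diagonality.} In $G=\pi_1(M)$, the meridian $x$ and the preferred longitude $\lambda$ commute, since both lie in the peripheral $\mathbb{Z}^2$ subgroup $\pi_1(\partial M)$. Hence $\rho_s(\lambda)$ commutes with the diagonal matrix $\rho_s(x)=\mathrm{diag}(\sqrt{t},1/\sqrt{t})$. Because $t>3$, the two diagonal entries of $\rho_s(x)$ are distinct, so its centralizer in $SL_2(\mathbb{R})$ is exactly the subgroup of diagonal matrices. Therefore $\rho_s(\lambda)$ is diagonal.

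\emph{Positivity, via a constant-sign argument.} Write $\rho_s(\lambda)=\mathrm{diag}(a(s),a(s)^{-1})$; the product of diagonal entries equals $\det\rho_s(\lambda)=1$, so $a(s)\neq 0$ for every $s>0$. The function $s\mapsto a(s)$ is continuous on the connected interval $(0,\infty)$: indeed $t$ depends continuously on $s$ by (\ref{eq:t}), every entry of $\rho_s$ is rational in $s$ and $\sqrt{t}^{\pm 1}$, and the denominator $\sqrt{t}-1/\sqrt{t}$ never vanishes (as $t>3$). A continuous, nowhere-zero real-valued function on a connected interval has constant sign, so it suffices to determine $\mathrm{sign}(a(s))$ at a single value of $s$.

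\emph{Executing the sign check.} Using $\lambda=x^{-4}w^{*}w$ together with $\rho_s(x)^{-4}=\mathrm{diag}(t^{-2},t^{2})$, one has $a(s)=t^{-2}\cdot[\rho_s(w^{*}w)]_{11}$. Equivalently, since $\rho_s(\lambda)$ is a diagonal matrix in $SL_2(\mathbb{R})$, the positivity of $a(s)$ is equivalent to the positivity of $\mathrm{tr}\,\rho_s(\lambda)=a(s)+a(s)^{-1}$, because for real nonzero $a$ the quantities $a$ and $a+a^{-1}$ share their sign. The main obstacle is the brute-force calculation: $w^{*}w$ has length twelve in $x,y$, so $\rho_s(w^{*}w)$ is a product of twelve $2\times 2$ matrices whose entries lie in the ring generated by $s,\sqrt{t}^{\pm 1}$, and simplification to a form whose sign is transparent will require systematic use of the Riley-type identity $\phi=0$. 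I would carry out the reduction once (at a convenient value of $s$, or asymptotically using one of the limits in Lemma \ref{lem:keylimit}), read off $a(s)>0$ there, and then invoke the constant-sign argument above to conclude that $a(s)>0$ for all $s\in(0,\infty)$.
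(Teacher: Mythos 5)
Your diagonality argument is correct and is essentially the paper's own: $x$ and $\lambda$ commute in $\pi_1(\partial M)$, $\rho_s(x)$ is diagonal with distinct diagonal entries, and its centralizer in $SL_2(\mathbb{R})$ is the diagonal subgroup. Your framework for positivity is also sound in principle: since the diagonal matrix $\rho_s(\lambda)$ has determinant $1$, its $(1,1)$-entry $a(s)$ never vanishes, it depends continuously on $s\in(0,\infty)$, so it has constant sign, and a single evaluation would decide everything.

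However, as written your proof has a genuine gap: the single sign check is never performed. You say you ``would carry out the reduction once,'' but that reduction is the entire computational content of the lemma, and nothing in your proposal substitutes for it --- in particular Lemma \ref{lem:keylimit} concerns only the scalars $s$ and $t$ and says nothing about the word $w^*w$, so an ``asymptotic'' sign check would still require expanding the twelve-letter matrix product. (Beware also of a circularity if you were tempted instead to quote $\lim_{s\to+0}B_s=1$ from the proof of Lemma \ref{lem:g-image}: that limit is itself computed from the explicit formula for $B_s$, which is exactly what you are deferring.) The paper does carry out this computation: using $\phi=0$ it reduces $\rho_s(\lambda)$ to the closed form (\ref{eq:B}), in which the second summand is visibly nonnegative (a square times $t^3$ times a square) and the first summand is $s$ times a square times $\bigl(t^2-(2+s)t+1\bigr)$; positivity therefore reduces to the single inequality $t^2-(2+s)t+1>0$, equivalently $T>s+2$, which is immediate from $T=\frac{2+3s+2s^2+\sqrt{s^2+4}}{2s}$. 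Your constant-sign scheme is a legitimate and genuinely different route --- it needs only the value of $a(s)$ at one point rather than a structural formula valid for all $s$ --- but it becomes a proof only once you exhibit that one exact evaluation (for instance, computing $\rho_s(w^*w)$ at a single explicit $s$ in exact arithmetic). As it stands, the essential step is missing.
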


\begin{proof}
Note that $\rho_s(x)$ is diagonal and $\rho_s(x)\ne \pm I$.
The fact that $\rho_s(x)$ commutes with $\rho_s(\lambda)$ easily implies that
$\rho_s(\lambda)$ is also diagonal.
(This can also be seen from a direct calculation of $\rho_s(\lambda)$, by using
$\phi(s,t)=0$.)

A direct calculation gives the $(1,1)$-entry
\begin{equation}\label{eq:B}
\begin{split}
\frac{1}{(t-1)^2 t^5}&
\Bigl(s\left(1 - (2 + s) t + t^2\right) \bigl(s-(2+2s+s^2)t+(1+s)t^2\bigr)^2 \\
& \quad + (1 + s - 
    t)^2 t^3\bigl(s-(1+s)^2t+st^2\bigr)^2\Bigr)
\end{split}
\end{equation}
of $\rho_s(\lambda)$.
Thus it is enough to show that $1-(2+s)t+t^2>0$.
This is equivalent to the inequality $T>2+s$, which is clear from
$T=\frac{2+3s+2s^2+\sqrt{s^2+4}}{2s}$.
\end{proof}

Let $r=p/q$ be a rational number, and let $M(r)$ denote
the resulting manifold by $r$-filling on the knot exterior $M$ of $K$.
In other words, $M(r)$ is obtained by 
attaching a solid torus $V$ to $M$ along their boundaries so that
the loop $x^p\lambda^q$ bounds a meridian disk of $V$.

Clearly, $\rho_s: G\to SL_2(\mathbb{R})$ induces
a homomorphism $\pi_1(M(r))\to SL_2(\mathbb{R})$
if and only if $\rho_s(x)^p\rho_s(\lambda)^q=I$.
Since both of $\rho_s(x)$ and $\rho_s(\lambda)$ are diagonal,
this is equivalent to the equation
\begin{equation}\label{eq:slope}
A_s^p B_s^q=1,
\end{equation}
where $A_s$ and $B_s$ are the $(1,1)$-entries of $\rho_s(x)$ and $\rho_s(\lambda)$, respectively.
We remark that $A_s=\sqrt{t}$ is a positive real number,
so is $B_s$ by Lemma \ref{lem:longitude}.
The equation (\ref{eq:slope}) is furthermore equivalent to
\begin{equation}
-\frac{\log B_s}{\log A_s}=\frac{p}{q}.
\end{equation}

Let $g:(0,\infty)\to \mathbb{R}$ be a function defined by
\[
g(s)=-\frac{\log B_s}{\log A_s}.
\]

\begin{lemma}\label{lem:g-image}
The image of $g$ contains an open interval $(0,4)$.
\end{lemma}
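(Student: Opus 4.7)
The plan is to show that $g$ is continuous on $(0,\infty)$, compute its limits as $s\to 0^+$ and $s\to\infty$, and then invoke the intermediate value theorem. Continuity of $g$ follows because $t$ depends continuously on $s$ by (\ref{eq:t}), $A_s=\sqrt{t}>1$ (since $t>3$) so $\log A_s>0$, and $B_s>0$ by Lemma \ref{lem:longitude}. I claim $\lim_{s\to 0^+} g(s)=0$ and $\lim_{s\to\infty} g(s)=4$; together with continuity these force the image of $g$ to contain $(0,4)$.

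For $s\to 0^+$, parts (1) and (2) of Lemma \ref{lem:keylimit} give $t\to\infty$ and $st\to 2$, so $\log A_s=\tfrac{1}{2}\log t\to\infty$. I will extract the leading-order behavior of the four polynomial factors appearing in (\ref{eq:B}): $1-(2+s)t+t^2\sim t^2$, $(1+s)t^2-(s^2+2s+2)t+s\sim t^2$, $(1+s-t)^2\sim t^2$, and $st^2-(1+s)^2t+s\sim t$ (the last because $st^2\sim 2t$ dominates $(1+s)^2t\sim t$). These estimates imply that the first summand in the numerator of (\ref{eq:B}) is $\sim st^6=(st)\,t^5\sim 2t^5$ and the second is $\sim t^7$, while the denominator is $\sim t^7$; hence $B_s\to 1$, so $\log B_s\to 0$ and $g(s)\to 0$.

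For $s\to\infty$, parts (3) and (4) give $t\sim s$, so $\log A_s=\tfrac{1}{2}\log s+o(1)$, and the task reduces to showing $B_s\sim 1/s^2$. The key step will be the algebraic identities
\begin{equation*}
1-(2+s)t+t^2=1+t(t-s-2),\qquad st^2-(1+s)^2t+s=st(t-s-2)-(t-s),
\end{equation*}
together with the companion rewriting $(1+s)t^2-(s^2+2s+2)t+s=(1+s)t(t-s-1)-(t-s)$. Using parts (3), (5), (6) of Lemma \ref{lem:keylimit}, these three quantities are $1+o(1)$, $o(s)$, and $s^2+o(s^2)$ respectively; the middle one uses $st(t-s-2)=s\cdot t(t-s-2)=o(s)$ by part (6). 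Substituting into (\ref{eq:B}) yields first summand of the numerator $\sim s^5$, second summand $o(s^5)$, and denominator $\sim s^7$. Thus $B_s\sim 1/s^2$, so $\log B_s=-2\log s+o(1)$ and $g(s)\to 4$.

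The hard part will be the asymptotic analysis at $s\to\infty$: naive leading-order estimates of $1-(2+s)t+t^2$ and of $st^2-(1+s)^2t+s$ vanish, so the rewritings above that isolate the delicate quantity $t-s-2$ are essential. Once they are in place, parts (5) and (6) of Lemma \ref{lem:keylimit} control the error terms and the remainder of the argument is bookkeeping in orders of magnitude.
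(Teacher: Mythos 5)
Your proof is correct and takes essentially the same route as the paper: both compute $\lim_{s\to 0^+}g(s)=0$ via $B_s\to 1$ and $\lim_{s\to\infty}g(s)=4$ by isolating the quantity $t-s-2$ through the same algebraic identities (e.g.\ $1-(2+s)t+t^2=1+t(t-s-2)$) and invoking parts (5) and (6) of Lemma \ref{lem:keylimit}. The only differences are cosmetic --- you normalize by powers of $s$ (showing $B_s\sim 1/s^2$) where the paper normalizes the numerator by $t^5$, and you make the continuity/intermediate-value step explicit where the paper leaves it implicit.
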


\begin{proof}
First, we show
\[
\lim_{s\to +0}g(s)=0.
\]
Since $\lim_{s\to +0}\log A_s=\infty$,
it is enough to show that $\lim_{s\to +0}B_s=1$.
We decompose $B_s$, given in (\ref{eq:B}), as
\begin{equation}
\begin{split}
B_s &=\frac{s}{t-1}\frac{1-(2+s)t+t^2}{(t-1)t}\left(\frac{s-(2+2s+s^2)t+(1+s)t^2}{t^2}\right)^2
\\
&\quad+\left(\frac{1+s-t}{t-1}\right)^2\left(\frac{s-(1+s)^2t+st^2}{t}\right)^2.
\end{split}
\end{equation}
From Lemma \ref{lem:keylimit},
$\lim_{s\to +0}t=\infty$ and $\lim_{s\to +0}st=2$.
These give
\[
\lim_{s\to +0}\frac{s}{t-1}=0, \quad \lim_{s\to +0}\frac{1-(2+s)t+t^2}{(t-1)t}=1,
\]
\[
\lim_{s\to +0}\frac{s-(2+2s+s^2)t+(1+s)t^2}{t^2}=1, \lim_{s\to +0}\frac{1+s-t}{t-1}=-1,
\]
and
\[
\lim_{s\to +0}\frac{s-(1+s)^2t+st^2}{t}=1.
\]
Thus we have $\lim_{s\to +0}B_s=0$.

Second, we show
\[
\lim_{s\to\infty}g(s)=4.
\]
Let $N$ be the numerator of $B_s$ shown in (\ref{eq:B}).
Then
\[
\frac{\log B_s}{\log A_s}=\frac{2\log N}{\log t}-\frac{2\log (t-1)^2t^5}{\log t}.
\]

\begin{claim}\label{cl:g}
$\lim_{s\to \infty}Nt^{-5}=1$.
\end{claim}

\begin{proof}[Proof of Claim \ref{cl:g}]
From Lemma \ref{lem:keylimit}, $\lim_{s\to \infty}s/t=1$, and $\lim_{s\to\infty}(1+s-t)=-1$.
We have
\begin{eqnarray*}
1-(2+s)t+t^2&=& t(t-s-2)+1,\\
\frac{s-(1+s)^2t+st^2}{t}&=&\frac{s}{t}+s(t-s-2)-1,\\
\frac{s-(2+2s+s^2)t+(1+s)t^2}{t^2}&=& \frac{1}{t}\cdot\frac{s-(1+s)^2t+st^2}{t}-\frac{1}{t}+1.
\end{eqnarray*}
Hence Lemma \ref{lem:keylimit} implies
\[
\lim_{s\to \infty}(1-(2+s)t+t^2)=
\lim_{s\to\infty}\frac{s-(2+2s+s^2)t+(1+s)t^2}{t^2}=1,
\]
\[
\lim_{s\to\infty}\frac{s-(1+s)^2t+st^2}{t}=0.
\]
Combining these, we have $\lim_{s\to \infty}Nt^{-5}=1$.
\end{proof}


Thus we have
$\lim_{s\to \infty}(\log N-5\log t)=0$.
Then 
\[
\lim_{s\to \infty}\frac{\log N}{\log t}=5.
\]
Clearly, 
\[
\lim_{t\to\infty}\frac{\log (t-1)^2t^5}{\log t}=7.
\]
Hence we have $\lim_{s\to \infty}g(s)=4$.
\end{proof}

\section{The universal covering group of $SL_2(\mathbb{R})$}\label{sec:univ}

Let
\[
SU(1,1)=\left\{
\begin{pmatrix}
\alpha & \beta \\
\bar{\beta} & \bar{\alpha}\\
\end{pmatrix}
\mid |\alpha|^2-|\beta|^2=1\right\}
\]
be the special unitary group over $\mathbb{C}$ of signature $(1,1)$.
It is well known that $SU(1,1)$ is conjugate to $SL_2(\mathbb{R})$ in $GL_2(\mathbb{C})$.
The correspondence is given by
$\psi:SL_2(\mathbb{R})\to SU(1,1)$, sending
$A\mapsto JAJ^{-1}$, where
\[J=
\begin{pmatrix}
1 & -i \\
1 & i
\end{pmatrix}.
\]
Thus
\[
\psi:
\begin{pmatrix}
a & b \\
c & d
\end{pmatrix}
\mapsto 
\begin{pmatrix}
\frac{a+d+(b-c)i}{2} & \frac{a-d-(b+c)i}{2} \\
\frac{a-d+(b+c)i}{2} & \frac{a+d-(b-c)i}{2}
\end{pmatrix}.
\]

There is a parametrization of $SU(1,1)$ by $(\gamma,\omega)$
 where $\gamma=\beta/\alpha$ and $\omega=\arg \alpha$ defined mod $2\pi$ (see \cite{B}).
Thus 
$SU(1,1)=\{(\gamma,\omega) \mid |\gamma|<1, -\pi\le \omega<\pi\}$.
Topologically, $SU(1,1)$ is an open solid torus $\Delta\times S^1$, where
$\Delta=\{\gamma\in \mathbb{C}\mid |\gamma|<1\}$. 
The group operation is given by
$(\gamma,\omega)(\gamma',\omega')=(\gamma'',\omega'')$, where
\begin{eqnarray}\label{eq:sl2R}
\gamma''&=& \frac{\gamma'+\gamma e^{-2i\omega'}}{1+\gamma\bar{\gamma'}e^{-2i\omega'}},\label{eq:sl2R1}\\
\omega''&=& \omega+\omega'+\dfrac{1}{2i}\log
\frac{1+\gamma\bar{\gamma'}e^{-2i\omega'}}{1+\bar{\gamma}\gamma'e^{2i\omega'}}.
\label{eq:sl2R2}
\end{eqnarray}
These equations come from the matrix operation.
Here, the logarithm function is defined by its principal value and $\omega''$ is defined by mod $2\pi$.
The identity element is $(0,0)$, and
the correspondence between
$\begin{pmatrix}
\alpha & \beta \\
\bar{\beta} & \bar{\alpha}\\
\end{pmatrix}$ and $(\gamma,\omega)$ gives an isomorphism.

Now, the universal covering group $\widetilde{SL_2(\mathbb{R})}$ of $SU(1,1)$
can be described as
\[
\widetilde{SL_2(\mathbb{R})}=\{(\gamma,\omega)\mid |\gamma|<1, -\infty<\omega<\infty\}.
\]
Thus $\widetilde{SL_2(\mathbb{R})}$ is homeomorphic to $\Delta\times \mathbb{R}$. 
The group operation is given by (\ref{eq:sl2R1}) and (\ref{eq:sl2R2}) again, but
$\omega''$ is not mod $2\pi$ anymore.

Let $\Phi: \widetilde{SL_2(\mathbb{R})}\to SL_2(\mathbb{R})$ be the covering projection.
Then it is obvious that $\ker\Phi=\{(0,2m\pi)\mid m\in \mathbb{Z}\}$.

\begin{lemma}
The subset $(-1,1)\times \{0\}$ of $\widetilde{SL_2(\mathbb{R})}$ forms a subgroup.
\end{lemma}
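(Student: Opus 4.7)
The plan is to verify the three subgroup axioms directly from the group law (\ref{eq:sl2R1})--(\ref{eq:sl2R2}). The identity $(0,0)$ evidently lies in $(-1,1)\times\{0\}$, so the real work is closure under multiplication and under inverses.

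For closure, I would specialize the formulas by setting $\omega=\omega'=0$ and $\gamma,\gamma'\in(-1,1)\subset\mathbb{R}$. The exponential factors all become $1$ and $\bar{\gamma'}=\gamma'$, so (\ref{eq:sl2R1}) collapses to the real number $\gamma''=(\gamma+\gamma')/(1+\gamma\gamma')$. A short algebraic manipulation (equivalent to the classical rapidity-addition identity) then gives $1-(\gamma'')^2=(1-\gamma^2)(1-\gamma'^2)/(1+\gamma\gamma')^2>0$, so $\gamma''\in(-1,1)$. For the second coordinate, both $\gamma\bar{\gamma'}e^{-2i\omega'}$ and $\bar{\gamma}\gamma'e^{2i\omega'}$ reduce to the real number $\gamma\gamma'$, so the fraction inside the logarithm in (\ref{eq:sl2R2}) equals $1$; taking the principal branch, the log contributes $0$ and $\omega''=0$.

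For inverses, I would solve $(\gamma,0)(\gamma',\omega')=(0,0)$. The condition $\gamma''=0$ in (\ref{eq:sl2R1}) forces $\gamma'=-\gamma\, e^{-2i\omega'}$. Substituting this back shows $\gamma\bar{\gamma'}e^{-2i\omega'}=\bar{\gamma}\gamma'e^{2i\omega'}=-|\gamma|^2$, so once again the logarithm in (\ref{eq:sl2R2}) vanishes and the equation $\omega''=0$ reduces to $\omega'=0$; hence $\gamma'=-\gamma\in(-1,1)$, and $(\gamma,0)^{-1}=(-\gamma,0)$ belongs to the subset.

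No substantive obstacle is expected; the only point to watch is the branch of the logarithm, but in every case arising above the argument is a positive real number (in fact $1$), so its principal value is $0$ and no ambiguity in the lift to $\widetilde{SL_2(\mathbb{R})}$ can occur.
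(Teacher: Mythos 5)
Your proposal is correct and takes essentially the same approach as the paper: direct verification from the group law (\ref{eq:sl2R1})--(\ref{eq:sl2R2}), with the inverse of $(\gamma,0)$ being $(-\gamma,0)$. You simply spell out the computations (the rapidity-addition identity and the vanishing of the logarithm term) that the paper declares straightforward.
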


\begin{proof}
From (\ref{eq:sl2R1}) and (\ref{eq:sl2R2}),
it is straightforward to see that $(-1,1)\times \{0\}$ is closed under the group operation.
For $(\gamma,0)\in (-1,1)\times\{0\}$, its inverse is $(-\gamma,0)$.
\end{proof}

For the representation $\rho_s : G\to SL_2(\mathbb{R})$ defined by (\ref{eq:rho}),
\begin{equation}
\psi(\rho_s(x))=\frac{1}{2\sqrt{t}}
\begin{pmatrix}
t+1 & t-1 \\
t-1 & t+1
\end{pmatrix}\in SU(1,1).
\end{equation}
Thus $\psi(\rho_s(x))$ corresponds to $(\gamma_x,0)$, where $\gamma_x=\dfrac{t-1}{t+1}$.


Also, for a longitude $\lambda$,
\[
\psi(\rho_s(\lambda))=\frac{1}{2}
\begin{pmatrix}
B_s+\frac{1}{B_s} & B_s-\frac{1}{B_s} \\
B_s-\frac{1}{B_s} & B_s+\frac{1}{B_s}
\end{pmatrix}, B_s>0
\]
from Lemma \ref{lem:longitude}.
Thus $\psi(\rho_s(\lambda))$ corresponds to $(\gamma_\lambda,0)$, where
$\gamma_\lambda=\dfrac{B_s^2-1}{B_s^2+1}$.

\section{Proof of Theorem}

As the knot exterior $M$ satisfies $H^2(M;\mathbb{Z})=0$,
any $\rho_s: G\to SL_2(\mathbb{R})$ lifts to a representation
$\tilde{\rho}: G\to\widetilde{SL_2(\mathbb{R})}$ \cite{G}.
Moreover, any two lifts $\tilde{\rho}$ and $\tilde{\rho}'$ are
related as follows:
\[
\tilde{\rho}'(g)=h(g)\tilde{\rho}(g),
\]
where $h:G\to \ker \Phi\subset\widetilde{SL_2(\mathbb{R})}$.
Since $\ker \Phi=\{(0,2m\pi)\mid m\in \mathbb{Z}\}$ is isomorphic to $\mathbb{Z}$,
the homomorphism $h$ factors through $H_1(M)$, so
it is determined only by the value $h(x)$ of a meridian $x$ (see \cite{Kh}).

The following result is the key in \cite{BGW}, which is originally claimed in \cite{Kh}, for the figure eight knot. 
Our proof most follows that of \cite{BGW}, but it is much simpler,
because of the values of $\psi(\rho_s(x))$ and $\psi(\rho_s(\lambda))$, which
are calculated in Section \ref{sec:univ}.

\begin{lemma}\label{lem:key}
Let $\tilde{\rho}: G\to \widetilde{SL_2(\mathbb{R})}$ be a lift of $\rho_s$.
Then replacing $\tilde{\rho}$ by a representation
$\tilde{\rho}'=h\cdot \tilde{\rho}$ for some $h:G\to \widetilde{SL_2(\mathbb{R})}$,
we can suppose that $\tilde{\rho}(\pi_1(\partial M))$ is contained in the subgroup $(-1,1)\times \{0\}$ of $\widetilde{SL_2(\mathbb{R})}$.
\end{lemma}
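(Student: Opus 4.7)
The plan exploits the key observation from Section \ref{sec:univ} that $\psi(\rho_s(x))$ corresponds to $(\gamma_x,0)$ and $\psi(\rho_s(\lambda))$ corresponds to $(\gamma_\lambda,0)$, both sitting in the subgroup $(-1,1)\times\{0\}\subset SU(1,1)$. Consequently, for any lift $\tilde{\rho}$ of $\rho_s$, both $\tilde{\rho}(x)$ and $\tilde{\rho}(\lambda)$ lie in the preimage $(-1,1)\times 2\pi\mathbb{Z}$ inside $\widetilde{SL_2(\mathbb{R})}$, and to prove the lemma we need only cancel the integer multiple of $2\pi$ in their second coordinates.

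First I would adjust the meridian. Because $\ker\Phi\cong\mathbb{Z}$ is central, any homomorphism $h\colon G\to\ker\Phi$ factors through $H_1(M)\cong\mathbb{Z}\langle x\rangle$ and is determined by the value $h(x)$; in particular $h(y)=h(x)$ (since $y$ is homologous to $x$) and $h(\lambda)=0$ (since $\lambda$ is null-homologous). Fix an initial lift $\tilde{\rho}$, write $\tilde{\rho}(x)=(\gamma_x,2k\pi)$ for some $k\in\mathbb{Z}$, and set $h(x)=(0,-2k\pi)$, so that $\tilde{\rho}'=h\cdot\tilde{\rho}$ satisfies $\tilde{\rho}'(x)=(\gamma_x,0)\in(-1,1)\times\{0\}$. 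Since $h(\lambda)=0$, we have $\tilde{\rho}'(\lambda)=\tilde{\rho}(\lambda)=(\gamma_\lambda,2l\pi)$ for some integer $l$ that does not depend on the particular lift chosen (any two lifts differ by an $h$ vanishing on $\lambda$). The question reduces to: is $l=0$?

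This is the main obstacle. The plan is a direct computation using the word $\lambda=x^{-4}w^*w$ with $w=xyx^{-1}y^{-1}xy$ and $w^*=yxy^{-1}x^{-1}yx$. Choosing any concrete lift of $\rho_s(y)$, one iteratively applies the group operation (\ref{eq:sl2R1})--(\ref{eq:sl2R2}) in $\widetilde{SL_2(\mathbb{R})}$ to the sixteen letters of $x^{-4}w^*w$, tracking only the second coordinate: each multiplication contributes a principal-branch correction of the form $\frac{1}{2i}\log\bigl((1+\gamma\bar{\gamma'}e^{-2i\omega'})/(1+\bar{\gamma}\gamma'e^{2i\omega'})\bigr)$ to the running $\omega$. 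The first coordinate of the final product is forced to equal $\gamma_\lambda$ by Lemma \ref{lem:longitude} and the computation at the end of Section \ref{sec:univ}, which serves as a sanity check. The hard part is the bookkeeping for the successive $\omega$-corrections: one must keep the logarithm branches consistent and use the Riley-type relation $\phi(s,t)=0$ to show that they sum exactly to $0$. As the authors flag, the simplification over the analogous step in \cite{BGW} is precisely that here the target values $\psi(\rho_s(x))$ and $\psi(\rho_s(\lambda))$ are already purely real, so no imaginary accumulation needs to be handled and only the integer shift in $\omega$ is at stake.
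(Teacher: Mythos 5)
Your first paragraph and the meridian adjustment coincide exactly with the first half of the paper's proof: both use that $h$ factors through $H_1(M)$, so $h(\lambda)=0$ and $h$ is determined by $h(x)$, and both reduce the lemma to showing that the integer $l$ in $\tilde{\rho}(\lambda)=(\gamma_\lambda,2l\pi)$ is zero. But at exactly that point --- the only nontrivial content of the lemma --- your proposal stops proving and starts describing. The sixteen-letter bookkeeping with principal-branch corrections and the relation $\phi(s,t)=0$ is never carried out, and nothing in the proposal indicates why the corrections should sum to $0$ rather than to some nonzero multiple of $2\pi$; the ``hard part'' you flag \emph{is} the lemma. The paper disposes of this step with a short conceptual argument that is entirely absent from your plan: since $5_2$ has genus one, $\lambda$ is a single commutator in $G$, hence $\tilde{\rho}(\lambda)$ is a commutator in $\widetilde{SL_2(\mathbb{R})}$, and Wood's inequality (inequality (5.5) of \cite{W}) forces the $\omega$-coordinate of a commutator to lie in $(-3\pi/2,3\pi/2)$; since $2l\pi$ lies in that interval, $l=0$ with no computation at all. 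If you want a computational substitute, you would at minimum also need an argument valid for every $s\in(0,\infty)$ (e.g.\ integrality of $l$ plus continuity in $s$, then a limit computation), none of which is sketched.

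There is also a well-definedness flaw in the computation as you set it up. The word $\lambda=x^{-4}w^*w$ has exponent sum $-2$ in $x$ and $+2$ in $y$. Since $\ker\Phi$ is central, replacing your chosen lift of $\rho_s(y)$ by its product with $(0,2\pi b)$ changes the value of this word by $(0,4\pi b)$. So evaluating the word at ``any concrete lift of $\rho_s(y)$'' does not determine $l$ (only its parity): you must use the unique lift of $\rho_s(y)$ for which the lifted relation $\tilde{w}\tilde{x}=\tilde{y}\tilde{w}$ holds exactly in $\widetilde{SL_2(\mathbb{R})}$, i.e.\ the value $\tilde{\rho}(y)$ of an honest homomorphism lift, and pinning down that lift is an additional step your plan omits. (Alternatively one can rewrite $\lambda$ as a product of commutators, which makes the evaluation independent of the choice of lifts --- but that is precisely the observation that lets the paper avoid the computation altogether.)
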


\begin{proof}
Since $\Phi(\tilde{\rho}(\lambda))=(\gamma_\lambda,0)$, $\gamma_\lambda\in (-1,1)$,
$\tilde{\rho}(\lambda)=(\gamma_\lambda,2j\pi)$ for some $j$.
On the other hand, $\lambda$ is a commutator,
because our knot is genus one.
Therefore the inequality (5.5) of \cite{W} implies  $-3\pi/2<2j\pi<3\pi/2$.
Thus we have $\tilde{\rho}(\lambda)=(\gamma_\lambda,0)$.

Similarly, $\tilde{\rho}(x)=(\gamma_x,2\ell \pi)$ for some $\ell$, where $\gamma_x\in (-1,1)$.
Let us choose $h:G\to \widetilde{SL_2(\mathbb{R})}$
so that $h(x)=(0,-2\ell\pi)$.
Set $\tilde{\rho}'=h\cdot \tilde{\rho}$.
Then a direct calculation shows that $\tilde{\rho}'(x)=(\gamma_x,0)$ and $\tilde{\rho}'(\lambda)=(\gamma_\lambda,0)$.
Since $x$ and $\lambda$ generate the peripheral subgroup $\pi_1(\partial M)$,
the conclusion follows from these.
\end{proof}

\begin{proof}[Proof of Theorem \ref{thm:main}]
Let $r=p/q\in (0,4)$.
By Lemma \ref{lem:g-image}, we can fix $s$ so that $g(s)=r$.
Choose a lift $\tilde{\rho}$ of $\rho_s$ so that
$\tilde{\rho}(\pi_1(\partial M))\subset (-1,1)\times\{0\}$.
Then $\rho_s(x^p\lambda^q)=I$, so $\Phi(\tilde{\rho}(x^p\lambda^q))=I$.
This means that $\tilde{\rho}(x^p\lambda^q)$ lies in $\ker\Phi=\{(0,2m\pi)\mid m\in \mathbb{Z}\}$.
Hence $\tilde{\rho}(x^p\lambda^q)=(0,0)$.
Then $\tilde{\rho}$ can induce a homomorphism $\pi_1(M(r))\to \widetilde{SL_2(\mathbb{R})}$
with non-abelian image.
Recall that $\widetilde{SL_2(\mathbb{R})}$ is left-orderable \cite{Be}.
Since $M(r)$ is irreducible \cite{HT}, 
$\pi_1(M(r))$ is left-orderable by \cite[Theorem 1.1]{BRW}.
This completes the proof.
\end{proof}

\bibliographystyle{amsplain}

\begin{thebibliography}{BGW}

\bibitem{B}
V. Bargmann, 
\textit{Irreducible unitary representations of the Lorentz group},
Ann. of Math. \textbf{48} (1947), 568--640. 

\bibitem{Be}
G. Bergman, 
\textit{Right orderable groups that are not locally indicable}, 
Pacific J. Math. \textbf{147} (1991), 243--248. 


\bibitem{BGW}
S. Boyer, C. McA. Gordon and L. Watson,
\textit{On $L$-spaces and left-orderable fundamental groups},
preprint, \texttt{arXiv:1107.5016}.

\bibitem{BRW}
S. Boyer, D. Rolfsen and B. Wiest,
\textit{Orderable 3-manifold groups},
Ann. Inst. Fourier (Grenoble) \textbf{55} (2005), 243--288.

\bibitem{BW}
M. Brittenham and  Y. Q. Wu,
\textit{The classification of exceptional Dehn surgeries on 2-bridge knots},
Comm. Anal. Geom. \textbf{9} (2001), 97--113.


\bibitem{CLW}
A. Clay, T. Lidman and L. Watson,
\textit{Graph manifolds, left-orderability and amalgamation},
preprint, \texttt{arXiv:1106.0486}.

\bibitem{CT}
A. Clay and M. Teragaito,
\textit{Left-orderability and exceptional Dehn surgery on two-bridge knots},
to appear in the Proceedings of Geometry and Topology Down Under,
Contemporary Mathematics Series.

\bibitem{G}
E. Ghys, 
\textit{Groups acting on the circle},
Enseign. Math. \textbf{47} (2001), 329--407. 


\bibitem{HT}
A. Hatcher and W. Thurston,
\textit{Incompressible surfaces in 2-bridge knot complements},
Invent. Math. \textbf{79} (1985), 225--246.

\bibitem{Kh}
V. T. Khoi,
\textit{A cut-and-paste method for computing the Seifert volumes},
Math. Ann. \textbf{326} (2003), 759--801. 


\bibitem{N}
Y. Ni,
\textit{Knot Floer homology detects fibred knots},
Invent. Math. \textbf{170} (2007), 577--608.

\bibitem{OS}
P. Ozsv\'{a}th and Z. Szab\'{o},
\textit{On knot Floer homology and lens space surgeries},
Topology \textbf{44} (2005), 1281--1300. 

\bibitem{Ri}
R. Riley,
\textit{Nonabelian representations of 2-bridge knot groups}, 
Quart. J. Math. Oxford Ser. (2) \textbf{35} (1984), 191--208. 


\bibitem{R}
D. Rolfsen,
\textit{Knots and links},
Mathematics Lecture Series, No. 7. Publish or Perish, Inc., Berkeley, Calif., 1976.

\bibitem{S}
H. Schubert,
\textit{Knoten mit zwei Br\"{u}cken},
Math. Z. \textbf{65} (1956), 133--170. 

\bibitem{T}
M. Teragaito,
\textit{Left-orderability and exceptional Dehn surgery on twist knots},
to appear in Canad. Math. Bull.

\bibitem{W}
J. Wood,
\textit{Bundles with totally disconnected structure group}, 
Comment. Math. Helv. \textbf{46} (1971), 257--273. 


\end{thebibliography}

\end{document}